\title{Universal moduli of continuity for solutions \\ to fully nonlinear elliptic equations}
\author{by  \vspace{0.3cm}  \\    \textsc{eduardo v. teixeira} \\ \textit{\footnotesize Universidade Federal do Cear\'a}  \\ \textit{\footnotesize Fortaleza, CE, Brazil} }
\date{}
\newlength{\hchng}
\newlength{\vchng}
\newtheorem{theorem}{Theorem}[section]
\newtheorem{lemma}[theorem]{Lemma}
\newtheorem{corollary}[theorem]{Corollary}
\theoremstyle{definition}
\theoremstyle{remark}
\newtheorem{remark}[theorem]{Remark}
\numberwithin{equation}{section}
\newcommand{\intav}[1]{\mathchoice {\mathop{\vrule width 6pt height 3 pt depth  -2.5pt
\kern -8pt \intop}\nolimits_{\kern -6pt#1}} {\mathop{\vrule width
5pt height 3  pt depth -2.6pt \kern -6pt \intop}\nolimits_{#1}}
{\mathop{\vrule width 5pt height 3 pt depth -2.6pt \kern -6pt
\intop}\nolimits_{#1}} {\mathop{\vrule width 5pt height 3 pt depth
-2.6pt \kern -6pt \intop}\nolimits_{#1}}}
\begin{document}
\maketitle

\begin{abstract}
This paper provides universal, optimal moduli of continuity for
viscosity solutions to fully nonlinear elliptic equations $F(X,
D^2u) = f(X)$, based on weakest integrability properties of $f$ in
different scenarios. The primary result established in this work
is a sharp Log-Lipschitz estimate on $u$ based on the $L^n$ norm
of $f$, which corresponds to optimal regularity bounds for the
critical threshold case. Optimal $C^{1,\alpha}$ regularity
estimates are delivered when $f\in L^{n+\epsilon}$. The limiting
upper borderline case, $f\in  L^\infty$, also has transcendental
importance to elliptic regularity theory and its applications. In
this paper we show, under convexity assumption on $F$, that $u \in
C^{1,\mathrm{Log-Lip}}$, provided $f$ has bounded mean
oscillation. Once more, such an estimate is optimal. For the lower
borderline integrability condition allowed by the theory, we
establish interior \textit{a priori} estimates on the
$C^{0,\frac{n-2\varepsilon}{n-\varepsilon}}$ norm of $u$ based on
the $L^{n-\varepsilon}$ norm of $f$, where $\varepsilon$ is the
Escauriaza universal constant. The exponent
$\frac{n-2\varepsilon}{n-\varepsilon}$ is optimal. When the source
function $f$ lies in $L^q$,  $n > q > n-\varepsilon$, we also
obtain the exact, improved sharp H\"older exponent of continuity.

\medskip

\noindent \textit{MSC:} 35B65, 35J60.

\medskip

\noindent \textbf{Keywords:} Regularity theory, optimal \textit{a priori} estimates,  fully nonlinear elliptic equations.

\end{abstract}


\section{Introduction}

The key objective of the present paper is to derive, interior, optimal and universal moduli of continuity to solutions, as well their gradients, for second order equations of the form
$$
    F(X, D^2u) = f(X), \text{ in } Q_1 \subset \mathbb{R}^n,
$$
under appropriate conditions on the coefficients and integrability properties of $f$. Such information is crucial in a number of problems. This is particularly meaningful in certain geometric and free boundary problems governed by these equations, where one needs fine control of the growing rate of solutions from their level surfaces.

\par

Throughout this paper we shall work under uniform ellipticity condition on the operator $F\colon Q_1 \times \mathcal{S}(n) \to \mathbb{R}$, that is, there exist two positive constants $0 < \lambda \le \Lambda$ such that, for any $M \in \mathcal{S}(n)$ and $X \in Q_1$,
\begin{equation}  \label{unif ellip}
    \lambda \|P\| \le F(X, M+P) - F(X, M) \le \Lambda \|P\|, \quad \forall P \ge 0.
\end{equation}

Following classical terminology, any operator $F$ satisfying ellipticity condition \eqref{unif ellip} will be called $(\lambda, \Lambda)$-elliptic. Also, any constant or entity that depends only on dimension and the ellipticity parameters $\lambda$ and $\Lambda$ will be called \textit{universal}. For normalization purposes, we assume, with no loss of generality, throughout the text that $F(X, 0) = 0, \quad \forall X \in Q_1.$

Regularity theory for viscosity solutions to fully nonlinear elliptic equations has been a central subject of research since the fundamental work of Krylov and Safonov, \cite{KS1, KS2} on Harnack inequality for solutions to (linear) non-divergence form  elliptic equations unlocked the theory. By linearization, solutions to $F(D^2u) = 0$ as well as first derivatives of $u$, $u_\nu$, do satisfy  linear elliptic equations in non-divergence form with bounded measurable coefficients. Therefore, solutions are \textit{a priori} $C^{1,\alpha}$ for some unknown $\alpha> 0$. The language of viscosity solutions allows the same conclusion without linearizing the equation, see \cite{CC}.

\par

Under concavity or convexity assumption on $F$, Evans and Krylov, \cite{E} and \cite{K1, K2}, obtained $C^{2,\alpha}$ estimates for solutions to $F(D^2u) = 0$, establishing, therefore, a classical existence theory for such equations.  Two and a half decades passed until Nadirashvili and Vladut, \cite{NV1, NV2} exhibit solutions to uniform elliptic equations whose Hessian blow-up, i.e., that are not $C^{1,1}$.  More dramatically, they have recently shown that given any $0<\tau < 1$ it is possible to build up a uniformly elliptic operator $F$, whose solutions are not $C^{1,\tau}$, see \cite{NV}, Theorem 1.1. A major open question in the theory has been to determine, for a given $F$, the optimal universal $\alpha_F$ for which there are $C^{1,\alpha_F}$ estimates for solutions to the homogeneous equation $F(D^2 u) = 0$.

\par

The corresponding regularity theory for heterogeneous, non-constant coefficient equations $F(X, D^2u) = f(X)$ is rather more delicate and, clearly, solutions may not be as regular as solutions to the homogeneous, constant-coefficient equation. In this setting, L. Caffarelli, in a groundbreaking work, \cite{C1}, established $W^{2,p}$,  \textit{a priori} estimates for solutions to
$F(X, D^2u) = f\in L^p,$ $p > n$, under some sort of VMO condition on the coefficients, provided the  homogeneous, constant-coefficient equation
$F(X_0, D^2 u) = 0$ has  $C^{1,1}$ \textit{a priori}  estimates. In \cite{C1} it is also shown that for $p < n$, there exists a uniformly elliptic operator satisfying the hypothesis of Caffarelli's Theorem, for which $W^{2,p}$ estimates fail. Nevertheless, for a fixed elliptic operator $F(X, D^2u)$,   L. Escauriaza, \cite{Es}, established the existence of a universal constant $\varepsilon = \varepsilon(n, \Lambda,\lambda) \in (0, \frac{n}{2})$, for which Caffarelli's $W^{2,p}$ theory is valid for $p \ge n - \varepsilon$. Hereafter in this paper, $\varepsilon = \varepsilon (n, \Lambda,\lambda)$ will always denote the universal Escauriaza constant. As a product  of \cite{Es}, the following Harnack type inequality holds for nonnegative solutions:
$$
    \sup\limits_{B_{r/2}} u \le C \left [ \inf\limits_{B_{r/2}} + r^{2 - \frac{n}{n-\varepsilon}} \|f\|_{L^{n-\varepsilon}(B_r)} \right ].
$$
In particular, solutions are locally $C^\delta$ for some $\delta > 0$; however, the optimal $\delta > 0$ cannot be determined through oscillation decay coming from Harnack inequality.  A. Swiech, in an important paper, \cite{S}, obtained $W^{1,q}$ \textit{almost optimal} regularity estimates, in the range $q<p*$, for every $n-\epsilon<p\leq n$. 

\par

Nevertheless, in many situations, it is important to obtain precisely the optimal regularity available for solutions. For instance if one is required to obtain fine control on how fast solutions $u$ grow from, say, its zero level set. In this setting, knowing exactly the optimal universal modulus of continuity of $u$ is essential.  The main goal of this present article is, therefore, to establish the best modulus of continuity available for solutions to heterogeneous fully nonlinear elliptic equations
$$
    F(X, D^2u) = f,
$$
under minimal and borderline  integrability properties of $f$. We will show, see Section \ref{Calpha theory}, that if $f \in L^{n-\varepsilon}$, then solutions are locally ${C^{0, \frac{n-2\varepsilon}{n-\varepsilon}}}$ and such a continuity is optimal. Differently from \cite{C1} and \cite{E}, we do not assume constant coefficient equation has \textit{a priori} $C^{1,1}$ estimates. Instead, our approach requires only smallness assumption on the oscillation of the coefficients of $F$, see \eqref{beta}.

\par

The threshold case $f\in L^n$ has remarkable importance to the theory as it appears in many analytic and geometric tools such as Alexandrov- Bakel'man-Pucci estimates and the classical Harnack inequality. Indeed, condition $f \in L^n$ divides the regularity theory from continuity estimates to differentiability properties of solutions. For instance, in the analysis of the classical Poisson equation, $\Delta u = f$, the derivative of the fundamental solution, $D\Gamma$, behaves like $|Z|^{1-n}$. Thus $D\Gamma \in L^{\frac{n}{n-1} - \delta} $, for any $\delta > 0$, but $D\Gamma \not \in L^{\frac{n}{n-1}}$. In conclusion, as  $n$ is the dual exponent of $\frac{n}{n-1}$, we find out that if $f \in L^{n + \delta}$, solutions are Lipschitz continuous (and in fact differentiable), but it is impossible to bound the Lipschitz norm of $u$ by the $L^n$ norm of $f$.

\par

For the critical case, $f \in L^n$, we show, see Theorem
\ref{Log-Lip theory},  that solutions to $F(X, D^2u) = f$ are
Log-Lipschitz continuous, that is,
$$
    |u(X) - u(Y)| \lesssim- \log \left ( |X-Y| \right ) \cdot |X-Y|.
$$
Such a Theorem can be understood as qualitative improvement to the
fact that $u \in C^\alpha$ for any $\alpha < 1$, and, as far as we
know, Theorem \ref{Log-Lip theory} is the first optimal regularity
estimate for the case when the source function is in the critical
threshold space $L^n$. Simple adjustments on the method developed in the proof of  Theorem
\ref{Log-Lip theory} further yield $\nabla u \in \text{VMO}$, provided $f\in L^n$ and the coefficients of $F$ are in $\text{VMO}$.

In accordance to  \cite{C1}, for sources in $f \in L^q$, $q > n$,
solutions are $C^{1,\alpha}$. Our approach  recovers such an
estimate and revels explicitly the optimal $\alpha$ allowed. The
upper borderline case, $f \in L^\infty$, or $f \in \text{BMO}$, has also been another
intriguing puzzle, with major importance to the (fully nonlinear)
elliptic regularity theory and its applications. In this article
we show that for equations with $C^{2, \epsilon}$ \textit{a
priori} estimates for $F$-harmonic functions, solutions are
$C^{1,\mathrm{Log-Lip}}$ smooth, in the sense that
$$
    \left | u(X) - \left [ u(Y) + \nabla u(Y) \cdot X \right ] \right| \lesssim r^2 \log
    r^{-1}, \quad r= |X-Y|.
$$

The results presented in this paper are new even when projected to
the context of (non-divergence) linear elliptic equations. They
are in accordance to the rich regularity theory, established via
singular integrals and potential analysis, for divergence form
equations of Poisson's type, $-\Delta u =f$.  The methods employed
in this present work, on the other hand, are inspired by the
primary essences of the revolutionary, seminal ideas from
\cite{C1}, and they allow further generalizations accordantly, in particular with connections to estimates from geometric measure theory, see \cite{M}.

\par

We finish up this introduction by pointing out that, as in
\cite{C1}, our regularity estimates are not committed to the
notion of viscosity weak solution one chooses to use, $C^2$,
$C^{1,1}$, $W^{2,p}$, etc. To avoid possible technical
inconsistence to the definition of viscosity solution chosen, one
can interpret Theorem \ref{Calpha theory}, Theorem \ref{Log-Lip
theory}, Theorem \ref{C1,Log-Lip theory with BMO} and their
consequences as \textit{a priori} estimates, that depend only on
${L^{q}}$ norm of $f$, $n-\varepsilon \le q \le +\infty$.

\section{Optimal $C^{0,\alpha}$ regularity estimates} \label{Calpha theory}

Following \cite{C1}, for fixed $X_0 \in Q_1$, we measure the oscillation of the coefficients of $F$ around $X_0$ by
\begin{equation} \label{beta}
    \beta(X_0, X) := \sup\limits_{M \in \mathcal{S}(n) \setminus \{0\}} \dfrac{|F(X, M) - F(X_0, M)|}{\|M\|}.
\end{equation}
For simplicity we will write
$$
    \beta(0, X)  =: \beta(X).
$$

Our strategy  for proving optimal $C^{0,\alpha}$ regularity estimates is based on a refined compactness method, see \cite{C1}. It relies on a fine control of decay of oscillation based on the regularity theory available for a nice limiting equation. Next lemma is the key access point for our approach.

\begin{lemma}[Compactness Lemma]\label{compactness} Let $u$ be a viscosity solution to $F(X, D^2u) = f(X)$, with $|u| \le 1$,  in $Q_1$. Given any $\delta > 0$, we can find $\eta >0$ depending only on $n$, $\Lambda,~\lambda$ and $\delta$, such that if
\begin{equation}\label{Cond comp lemma}
    \intav{Q_1} \beta(X)^n dX \le \eta^n \quad \text{and} \quad \int_{Q_1} |f(X)|^{n-\varepsilon} dX \le \eta^{n-\varepsilon},
\end{equation}
then we can find a function $h \colon Q_{1/2} \to \mathbb{R}$ and a $(\lambda, \Lambda)$-elliptic, constant coefficient operator $\mathfrak{F} \colon \mathcal{S}(n) \to \mathbb{R}$, such that
\begin{equation}\label{eq comp lemma}
    \mathfrak{F}(D^2h) = 0, \text {in } Q_{1/2},
\end{equation}
in the viscosity sense and
\begin{equation}\label{thesis comp lemma}
    \sup\limits_{Q_{1/2}} |u - h| \le \delta.
\end{equation}

\end{lemma}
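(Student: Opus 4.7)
The plan is to argue by contradiction, reducing the statement to a stability-of-viscosity-solutions problem and exploiting Escauriaza-type equicontinuity estimates to extract convergent subsequences.

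Suppose the lemma fails. Then there exist $\delta_0 > 0$ and sequences of $(\lambda,\Lambda)$-elliptic operators $F_k$, functions $f_k$, and viscosity solutions $u_k$ of $F_k(X, D^2 u_k) = f_k$ in $Q_1$, with $|u_k| \le 1$, such that
\begin{equation*}
    \intav{Q_1} \beta_k(X)^n\, dX \le k^{-n}, \qquad \int_{Q_1} |f_k(X)|^{n-\varepsilon}\, dX \le k^{-(n-\varepsilon)},
\end{equation*}
yet $\sup_{Q_{1/2}} |u_k - h| > \delta_0$ for every $h$ satisfying $\mathfrak{F}(D^2 h)=0$ for some constant coefficient $(\lambda,\Lambda)$-elliptic $\mathfrak{F}$. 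The goal is to contradict this last condition by producing such an $h$ as a limit of (a subsequence of) the $u_k$.

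Next I would produce two compactness arguments. First, since $|u_k|\le 1$ and $\|f_k\|_{L^{n-\varepsilon}}\to 0$, the Escauriaza-Krylov-Safonov Hölder estimate (the inequality in the introduction coming from \cite{Es}) yields a uniform $C^{0,\delta}(Q_{3/4})$ bound on the $u_k$, so Arzelà-Ascoli gives a subsequence $u_k \to u_\infty$ uniformly on $Q_{1/2}$. Second, the family of constant-coefficient operators $\mathfrak{F}_k(M):= F_k(0,M)$ is $(\lambda,\Lambda)$-elliptic and uniformly Lipschitz in $M$ with $\mathfrak{F}_k(0)=0$; a diagonal Arzelà-Ascoli argument produces a subsequence $\mathfrak{F}_k \to \mathfrak{F}_\infty$ locally uniformly on $\mathcal{S}(n)$, with $\mathfrak{F}_\infty$ constant-coefficient $(\lambda,\Lambda)$-elliptic.

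The main obstacle is the stability step: showing that $u_\infty$ is a viscosity solution of $\mathfrak{F}_\infty(D^2 u_\infty) = 0$ in $Q_{1/2}$. The difficulty is that $u_k$ does not solve a constant-coefficient equation, the sources $f_k$ live only in $L^{n-\varepsilon}$ (below the classical $L^n$ threshold for ABP), and the coefficient error is only controlled in $L^n$-average by $\beta_k$. The strategy is the standard touching-test-function argument: if a smooth $\varphi$ touches $u_\infty$ strictly from above at $X_0 \in Q_{1/2}$, uniform convergence gives contact points $X_k\to X_0$ for perturbations $\varphi+c_k$, and at these contact points one uses the viscosity inequality for $u_k$ together with the decomposition
\begin{equation*}
    \mathfrak{F}_k(D^2\varphi(X_k)) \;=\; F_k(X_k, D^2\varphi(X_k)) \;+\; \bigl[\mathfrak{F}_k - F_k(X_k,\cdot)\bigr](D^2\varphi(X_k)),
\end{equation*}
whose first term is controlled via $f_k$ and whose second is pointwise bounded by $\beta_k(X_k)\,\|D^2\varphi\|_\infty$. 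The $L^{n-\varepsilon}$-smallness of $f_k$ and the $L^n$-smallness of $\beta_k$ must be converted into pointwise information at $X_k$; this is accomplished through the Escauriaza variant of ABP applied in small cubes around $X_k$, or equivalently by testing with inf-convolutions as in \cite{C1}, so that in the limit $\mathfrak{F}_\infty(D^2\varphi(X_0)) \le 0$. The analogous inequality for test functions from below yields $\mathfrak{F}_\infty(D^2 u_\infty)=0$ in the viscosity sense.

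Setting $h:=u_\infty$ and $\mathfrak{F}:=\mathfrak{F}_\infty$ finally contradicts the standing assumption, since uniform convergence $u_k\to u_\infty$ gives $\sup_{Q_{1/2}}|u_k-h|<\delta_0$ for all large $k$. This establishes the lemma, with $\eta = 1/k_0$ for the first $k_0$ at which the contradiction occurs. The delicate point to get right in a full write-up is the passage to the viscosity limit under only $L^{n-\varepsilon}$ integrability of $f_k$ and $L^n$ smallness of $\beta_k$; everything else is a routine compactness-and-contradiction packaging.
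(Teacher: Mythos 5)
Your proposal follows essentially the same compactness-by-contradiction scheme as the paper: extract a uniform limit $u_\infty$ via the Escauriaza--Krylov--Safonov H\"older estimate, extract a limit operator $\mathfrak{F}_\infty$ by Arzel\`a--Ascoli on $\mathcal{S}(n)$, and pass to the limit in the viscosity sense (the paper cites the argument of \cite{Es}, Lemma 2, for the equicontinuity and \cite{C1}, Lemma 13, for the stability step that you correctly flag as the delicate point and handle via inf-convolutions/ABP-in-small-cubes). The structure, the role of each hypothesis, and the final contradiction are identical to the paper's proof.
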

\begin{proof} Let us suppose, for the purpose of contradiction, that there exists a $\delta_0 > 0$ for which the thesis of the Lemma fails. That means that we could find a sequence of functions $u_j$, $|u_j| \le 1$ in $Q_1$,  a sequence of $(\lambda, \Lambda)$-elliptic operators $F_j \colon Q_1 \times \mathcal{S}(n) \to \mathbb{R}$ and a sequence of functions $f_j$ linked through
\begin{equation} \label{proof comp lemma eq01}
    F_j(X, D^2u_j) = f_j, \text{ in } Q_1,
\end{equation}
in the viscosity sense, with
\begin{equation} \label{proof comp lemma eq02}
    \intav{Q_1} \beta_j(X)^n dX + \int_{Q_1} |f(X)|^{n-\varepsilon} dX = \text{o}(1), \quad \text{ as } j \to \infty;
\end{equation}
however
\begin{equation} \label{proof comp lemma eq03}
    \sup\limits_{Q_{1/2}} |u_j - h| \ge \delta_0,
\end{equation}
for any $h$ satisfying \eqref{eq comp lemma} in $Q_{1/2}$. In \eqref{proof comp lemma eq02}, $\beta_j$ is the modulus of oscillation of the coefficients of the operator $F_j$ as in \eqref{beta}. By a consequence of Harnack inequality, see \cite{Es}, Lemma 2, we can assume, passing to a subsequence if necessary, that $u_j \to u_0$ locally uniformly in $Q_{11/20}$. Also, by uniform ellipticity, for each $X \in Q_1$ fixed, $F_j(X, M) \to F_0(X, M)$ locally uniformly in the space $\mathcal{S}(n)$. Finally, arguing as in the proof of \cite{C1}, Lemma 13, we conclude
$$
    F(0, D^2 u_0) = 0, \text{ in } Q_{1/2},
$$
in the viscosity sense and we reach a contradiction to \eqref{proof comp lemma eq03} for $j \gg 1$.
\end{proof}

\medskip


Our next lemma gives the first oscillation decay in our iterative scheme. We will write it in a bit more general form for future references.


\begin{lemma}\label{key lemma} Let $u$ be a viscosity solution to $F(X, D^2u) = f(X)$, with $|u| \le 1$,  in $Q_1$. Given $0 < \gamma < 1$, there exist $\eta > 0$ and $0 < \rho < 1/2$ depending only on $n$, $\Lambda,~\lambda$ and $\gamma$, such that if
\begin{equation}\label{Cond key lemma}
    \intav{Q_1} \beta(X)^n dX \le \eta^n \quad \text{and} \quad \int_{Q_1} |f(X)|^{n-\varepsilon} dX \le \eta^{n-\varepsilon},
\end{equation}
then, for some universally bounded constant $\mu \in \mathbb{R}$, $|\mu| \le C(n, \Lambda,\lambda)$, there holds
\begin{equation}\label{thesis key lemma}
    \sup\limits_{Q_{\rho}} |u - \mu | \le \rho^{\gamma}.
\end{equation}
\end{lemma}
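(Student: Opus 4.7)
The plan is to run a standard compactness argument on top of Lemma \ref{compactness} and then use the available $C^{1,\alpha_0}$ regularity (for some universal $\alpha_0>0$) enjoyed by solutions to constant-coefficient $(\lambda,\Lambda)$-elliptic equations $\mathfrak{F}(D^2 h)=0$. The constant $\mu$ will be chosen as $h(0)$, where $h$ is the limiting $\mathfrak{F}$-harmonic profile produced by the compactness lemma.

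More precisely, first I would fix parameters correctly. Since $h$ satisfies $\mathfrak{F}(D^2 h)=0$ in $Q_{1/2}$ with $\|h\|_{L^\infty(Q_{1/2})}\le 2$ (using $|u|\le 1$ and the $\delta$-closeness, for $\delta\le 1$), the classical Krylov-Safonov/Caffarelli-Cabré theory gives a universal constant $C_0=C_0(n,\lambda,\Lambda)$ and a universal exponent $\alpha_0>0$ such that $\|h\|_{C^{1,\alpha_0}(Q_{1/4})}\le C_0$. In particular, $h$ is Lipschitz near the origin with $|h(X)-h(0)|\le C_0 |X|$ for every $X\in Q_{1/4}$. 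With this in hand, for any radius $\rho\in(0,1/4)$ and any $X\in Q_\rho$,
\begin{equation*}
|u(X)-h(0)|\;\le\;|u(X)-h(X)|+|h(X)-h(0)|\;\le\;\delta+C_0\rho.
\end{equation*}

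Now I would calibrate $\rho$ and $\delta$. Given $\gamma\in(0,1)$, pick $\rho\in(0,1/4)$ universally (depending on $n,\lambda,\Lambda,\gamma$) so small that $C_0\rho\le\tfrac12\rho^\gamma$; this is possible because $\rho^{1-\gamma}\to 0$ as $\rho\to 0$. Then set $\delta:=\tfrac12\rho^\gamma$ and let $\eta=\eta(n,\lambda,\Lambda,\delta)>0$ be the constant delivered by Lemma \ref{compactness}. Note that $\eta$ ultimately depends only on $n,\lambda,\Lambda,\gamma$. Taking $\mu:=h(0)$, one has $|\mu|\le\|h\|_{L^\infty(Q_{1/2})}\le 2$, which is universal, and the previous estimate yields $\sup_{Q_\rho}|u-\mu|\le\delta+C_0\rho\le\rho^\gamma$, as required.

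The only potentially delicate point is invoking $C^{1,\alpha_0}$ interior estimates for the limiting operator $\mathfrak{F}$: the compactness lemma produces a $(\lambda,\Lambda)$-elliptic $\mathfrak{F}$ with no further structural assumption (no convexity, no smoothness of coefficients since they are constants), so one must rely on the Krylov-Safonov-type $C^{1,\alpha_0}$ estimates for viscosity solutions of homogeneous constant-coefficient uniformly elliptic equations, which hold with a universal exponent $\alpha_0=\alpha_0(n,\lambda,\Lambda)>0$. Since $\gamma<1$ is prescribed and $\alpha_0$ is merely used to extract Lipschitz control of $h$ at the origin, no conflict between $\gamma$ and $\alpha_0$ arises; the argument works uniformly in $\gamma\in(0,1)$ precisely because we only need first-order information on $h$.
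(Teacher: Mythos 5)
Your proposal is correct and follows essentially the same route as the paper: apply the Compactness Lemma to obtain an $\mathfrak{F}$-harmonic profile $h$ that is $\delta$-close to $u$, invoke universal $C^{1,\bar\alpha}$ interior estimates for constant-coefficient equations to get Lipschitz control $|h(X)-h(0)|\le C|X|$, set $\mu:=h(0)$, and calibrate $\rho$ and $\delta:=\tfrac12\rho^\gamma$ so the triangle inequality closes the estimate. The paper simply records the explicit choice $\rho=(1/(2C))^{1/(1-\gamma)}$ where you phrase the calibration implicitly; the content is identical.
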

\begin{proof} For a $\delta > 0$ to be chosen, we apply Lemma \ref{compactness} and find a function $h \colon Q_{1/2} \to \mathbb{R}$ satisfying
$$
    \mathfrak{F}(D^2 h) = 0, \text{ in } Q_{1/2},
$$
in the viscosity sense, such that
\begin{equation}\label{proof key lemma eq 01}
    \sup\limits_{Q_{1/2}} |u - h | \le \delta.
\end{equation}
From universal $C^{1,{\alpha}}$ regularity theory for viscosity solution to constant coefficient equations, (see, for instance, \cite{CC}, Corollary 5.7) there exist universal constants $\bar{\alpha} > 0$ and $C > 0$ such that
$$
    \|h\|_{C^{1,\bar{\alpha}} (Q_{1/3})} \le C.
$$
In particular,
\begin{equation}\label{proof key lemma eq 02}
    \sup\limits_{Q_{r}} |h - h(0) | \le Cr, \quad \forall r < 1/4.
\end{equation}
For $0< \gamma < 1$ given and fixed, we make the following universal selections
\begin{equation}\label{proof key lemma eq 03}
    \rho := \sqrt[1-\gamma]{\dfrac{1}{2C}} \quad \text{and} \quad \delta := \dfrac{1}{2} \rho^\gamma.
\end{equation}
The choice of $\delta$ determines $\eta$ through the Compactness Lemma \ref{compactness}. To finish up, we take $\mu = h(0)$ and, by triangular inequality,
$$
    \sup\limits_{Q_\rho} |u - \mu| \le \sup\limits_{Q_\rho} |h - \mu| + \sup\limits_{Q_\rho} |u - h| \le \rho^\gamma,
$$
as desired.
\end{proof}

We are ready to state and prove the main Theorem of this section,
which provides sharp modulus of continuity for solutions to
equations when the source function $f$ lies in the critical
integrability space $L^{n-\varepsilon}$.

\begin{theorem} \label{Calpha theory} Let $u$ be a viscosity solution to $F(X, D^2u) = f(X)$ in $Q_1$. There exists a universal constant $\vartheta_0 > 0$ such that if $\sup\limits_{Y\in Q_{1/2}} \|\beta(Y, \cdot )\|_{L^n} \le \vartheta_0$, then, for a universal constant $C> 0$, there holds
$$
    \|u\|_{C^{0, \frac{n-2\varepsilon}{n-\varepsilon}}(Q_{1/2})} \le C \left \{\|u\|_{L^\infty(Q_1)} + \|f\|_{L^{n-\varepsilon}(Q_1)} \right \},
$$
where $\varepsilon$ is the universal Escauriaza constant.
\end{theorem}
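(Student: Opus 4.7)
The plan is to iterate Lemma \ref{key lemma} in a discrete Campanato-type scheme, producing a geometrically decaying sequence of approximating constants $\mu_k$ at dyadic scales $\rho^k$ around an arbitrary base point in $Q_{1/2}$. The H\"older exponent $\gamma := \frac{n-2\varepsilon}{n-\varepsilon}$ is dictated by scaling: it is precisely the largest $\gamma \in (0,1)$ for which the natural rescaling $v_k(X) = \rho^{-k\gamma}[u(\rho^k X) - \mu_k]$ leaves the $L^{n-\varepsilon}$-smallness of the source invariant at every step, so that the hypothesis of Lemma \ref{key lemma} survives the induction; the $\beta$-condition is interpreted so as to be likewise scale- and translation-invariant.

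After dividing $u$ and $f$ by a universal multiple of $\|u\|_{L^\infty(Q_1)} + \|f\|_{L^{n-\varepsilon}(Q_1)}$, I may assume $|u|\le 1$ in $Q_1$ and $\|f\|_{L^{n-\varepsilon}(Q_1)}\le \eta$, where $\eta>0$ and $\rho\in(0,1/2)$ are the constants furnished by Lemma \ref{key lemma} applied with $\gamma=\frac{n-2\varepsilon}{n-\varepsilon}$; one also takes $\vartheta_0\le\eta$. I would then construct inductively $\{\mu_k\}_{k\ge 0}$ with $\mu_0=0$, $|\mu_{k+1}-\mu_k|\le C\rho^{k\gamma}$, and
$$\sup_{Q_{\rho^k}} |u-\mu_k| \le \rho^{k\gamma}.$$
For the inductive step, set $v_k(X):=\rho^{-k\gamma}[u(\rho^k X)-\mu_k]$; a direct calculation gives $G_k(X,D^2 v_k)=g_k(X)$ in $Q_1$, with rescaled operator $G_k(X,M):=\rho^{(2-\gamma)k}F(\rho^k X,\rho^{(\gamma-2)k}M)$ still $(\lambda,\Lambda)$-elliptic, oscillation modulus $\beta_{G_k}(0,X)=\beta(0,\rho^k X)$, and source $g_k(X):=\rho^{(2-\gamma)k}f(\rho^k X)$ with
$$\|g_k\|_{L^{n-\varepsilon}(Q_1)}^{n-\varepsilon} = \rho^{k\left[(2-\gamma)(n-\varepsilon)-n\right]} \|f\|_{L^{n-\varepsilon}(Q_{\rho^k})}^{n-\varepsilon}.$$
The bracketed exponent vanishes exactly when $\gamma=\frac{n-2\varepsilon}{n-\varepsilon}$, so $\|g_k\|_{L^{n-\varepsilon}(Q_1)}\le\eta$ for every $k$. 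Applying Lemma \ref{key lemma} to $v_k$ furnishes a universally bounded $\tilde\mu_k$ with $\sup_{Q_\rho}|v_k-\tilde\mu_k|\le \rho^\gamma$; setting $\mu_{k+1}:=\mu_k+\rho^{k\gamma}\tilde\mu_k$ closes the induction.

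The bound $|\mu_{k+1}-\mu_k|\le C\rho^{k\gamma}$ makes $(\mu_k)$ Cauchy, with limit $\mu_\infty$ satisfying $\sup_{Q_{\rho^k}}|u-\mu_\infty|\le C\rho^{k\gamma}$; standard interpolation across intermediate radii $r\in(\rho^{k+1},\rho^k)$ converts this into $|u(X)-u(0)|\le C|X|^\gamma$ on $Q_{1/2}$. Centering the whole procedure at an arbitrary $Y_0\in Q_{1/2}$—permissible because the assumptions on $\beta$ and $f$ are uniform in the base point—yields the full $C^{0,\gamma}$ seminorm estimate. I expect the main technical obstacle to be precisely the invariance check in the inductive step: ensuring that the three smallness conditions of Lemma \ref{key lemma}—on $|v_k|$, on $\beta_{G_k}$, and on $g_k$—all pass to every rescaling. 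The first comes free from the inductive hypothesis, the second from the scale-invariant form of the $\beta$-assumption, and the third is precisely what forces the sharp exponent $\gamma=\frac{n-2\varepsilon}{n-\varepsilon}$.
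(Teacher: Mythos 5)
Your proposal is correct and follows essentially the same route as the paper: normalize, iterate Lemma \ref{key lemma} at dyadic scales $\rho^k$ around a base point to build a Cauchy sequence of approximating constants with $|\mu_{k+1}-\mu_k|\le C\rho^{k\gamma}$, then interpolate across intermediate radii. The only (cosmetic) difference is that you make the scaling computation $(2-\gamma)(n-\varepsilon)-n=0$ explicit to explain why $\gamma=\frac{n-2\varepsilon}{n-\varepsilon}$ is exactly the exponent that keeps the $L^{n-\varepsilon}$-smallness of the rescaled source invariant, whereas the paper simply records the resulting inequality $\int_{Q_1}|f_k|^{n-\varepsilon}\le\int_{Q_1}|f|^{n-\varepsilon}$.
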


\begin{proof} Initially, by scaling and normalization, we can assume $|u| \le 1$ and $\|f\|_{L^{n-\varepsilon}(Q_1)} \le \eta$, where $\eta$ is the universal constant from Lemma \ref{key lemma} when $\gamma$ is taken to be $\frac{n-2\varepsilon}{n-\varepsilon}$.  Here we also set
$\vartheta_0 = \eta$, for the same choice above described. The strategy of the proof is to iterate Lemma \ref{key lemma} in dyadic cubes. More precisely, fixed $Y \in Q_{1/2}$, we will show that there exists a convergent sequence of real numbers $\mu_k$, such that
\begin{equation}\label{proof Calpha eq 01}
    \sup\limits_{Q_{\rho^k}(Y)} | u - \mu_k| \le \rho^{k \cdot \frac{n-2\varepsilon}{n-\varepsilon}},
\end{equation}
where $\rho$ is the radius granted from Lemma \ref{key lemma}, when taken $\gamma =  \frac{n-2\varepsilon}{n-\varepsilon}$. We prove \eqref{proof Calpha eq 01} by induction process. The step $k=1$ is precisely the contents of Lemma \ref{key lemma}. Suppose verified the $k$th step of  \eqref{proof Calpha eq 01}, we define
$$
    v_k(X) := \dfrac{u(Y+ \rho^k X) - \mu_k}{\rho^{k \cdot \frac{n-2\varepsilon}{n-\varepsilon}}}.
$$
We also define
\begin{equation}\label{proof Calpha eq 01.5}
    F_k(X, M) :=  \rho^{k \cdot [2 - \frac{n-2\varepsilon}{n-\varepsilon}]} F(X, \dfrac{1}{\rho^{k \cdot [2 - \frac{n-2\varepsilon}{n-\varepsilon}]}} M).
\end{equation}
Easily one verifies that $F_k$ is too $(\lambda, \Lambda)$-elliptic and
$$
    F_k(Y+\rho^k X, D^2v_k) = \rho^{k \cdot [2 - \frac{n-2\varepsilon}{n-\varepsilon}]} f(Y+ \rho^k X) =: f_k(X)
$$
in the viscosity sense. Furthermore,
$$
    \int_{Q_1} |f_k(X)|^{n-\varepsilon} dX \le \int_{Q_1} |f (X)|^{n-\varepsilon} dX \le \eta^{n-\varepsilon}.
$$
By the induction thesis \eqref{proof Calpha eq 01}, it follows that $|v_k| \le 1$. Thus, $v_k$ is entitled to Lemma \ref{key lemma}, which assures the existence of a universally bounded real number $\tilde{\mu}_k$, $|\tilde{\mu}_k|\le C$, such that
\begin{equation}\label{proof Calpha eq 02}
    \sup\limits_{Q_{\rho}} |v_k - \tilde{\mu} | \le \rho^{\frac{n-2\varepsilon}{n-\varepsilon}}.
\end{equation}
Define,
\begin{equation}\label{proof Calpha eq 03}
    \mu_{k+1} := \mu_k + \rho^{k \cdot \frac{n-2\varepsilon}{n-\varepsilon}} \tilde{\mu}_k.
\end{equation}
Rescaling \eqref{proof Calpha eq 02} to the unit picture gives the $(k+1)$th induction step in \eqref{proof Calpha eq 01}. It also follows from universal bounds that
\begin{equation}\label{proof Calpha eq 04}
    |\mu_{k+1} - \mu_k| \le C \rho^{k \cdot \frac{n-2\varepsilon}{n-\varepsilon}},
\end{equation}
thus the sequence is convergent and from \eqref{proof Calpha eq 01}, $\mu_k \to u(Y)$. Also from \eqref{proof Calpha eq 04}, it follows that
\begin{equation}\label{proof Calpha eq 05}
    |u(Y) - \mu_k| \le \dfrac{C}{1-  \rho^{\frac{n-2\varepsilon}{n-\varepsilon}}} \cdot \rho^{k \cdot \frac{n-2\varepsilon}{n-\varepsilon}}.
\end{equation}
Finally, given $0 < r < \rho$, let $k$ be the integer such that $\rho^{k+1} < r \le \rho^k$. We estimate from \eqref{proof Calpha eq 01} and \eqref{proof Calpha eq 05},
$$
    \begin{array}{lll}
        \displaystyle \sup\limits_{X \in Q_r(Y)} | u(X) - u(Y)| &\le&  \displaystyle \sup\limits_{X \in Q_{\rho^k} (Y)} | u(X) - \mu_k| + |u(Y) -               \mu_k| \\
        &\le& \left ( 1+ \dfrac{C}{1-  \rho^{\frac{n-2\varepsilon}{n-\varepsilon}}} \right ) \rho^{k \cdot \frac{n-2\varepsilon}{n-\varepsilon}} \\
        & \le & \dfrac{1}{\rho} \left ( 1+ \dfrac{C}{1-  \rho^{\frac{n-2\varepsilon}{n-\varepsilon}}} \right ) r^{\frac{n-2\varepsilon}{n-\varepsilon}}
    \end{array}
$$
and the proof of Theorem \ref{Calpha theory} is concluded.
\end{proof}

\bigskip

We finish up this section with few remarks on Theorem \ref{Calpha theory} as well as on some consequences of its proof.

\begin{remark}  \label{r1}
It is interesting to notice that our approach in fact gives
pointwise estimates on $u$. That is, if $\beta(X_0, X)$ is small
enough for some $X_0 \in Q_1$ fixed, then $u$ is $C^{0,
{\frac{n-2\varepsilon}{n-\varepsilon}}}$ continuous at $X_0$.
\end{remark}
\medskip
\begin{remark}\label{r3}
If $ n-\varepsilon \le q < n$ and $f \in L^q(Q_1)$, a simple adjustment
on the proof of Theorem \ref{Calpha theory} gives that $u \in
C^{0, \frac{2q - n}{q}}$ and
\begin{equation}\label{Calpha gen}
    \|u\|_{C^{0, \frac{2q - n}{q}}(Q_{1/2})} \le C \left \{\|u\|_{L^\infty(Q_1)} + \|f\|_{L^{q}(Q_1)} \right \},
\end{equation}
for a constant $C$ that depends only on $n$, $\Lambda, ~\lambda$ and
$q$. Such a result is in accordance, via embedding theorem, to the
regularity theory for equations with $W^{2,p}$ \textit{a priori}
estimates. For fully nonlinear equation, such estimates are only
available under the strong hypothesis of $C^{1,1}$ \textit{a
priori} estimates for $F$. In connection to Swiech  \textit{almost
optimal}  $W^{1,p}$ estimate,  \eqref{Calpha gen} represents the
prime sharp one.
\end{remark}
\medskip
\begin{remark}\label{r2}
Smallness condition on $L^n$-norm of $\beta(X_0, X)$ is obtained,
after scaling, when we assume the coefficients are in VMO. Without
any smallness assumption on the coefficients, Krylov-Safonov
Harnack inequality assures that $F$-harmonic functions, i.e.,
solutions to $F(X, D^2h) = 0$, are $C^{\beta}$ H\"older
continuous, for a universal $\beta$. The same reasoning employed
in this section gives that solutions to $F(X, D^2u) = f \in L^q$,
$q \ge n-\varepsilon$, lies in $C^{\min\{\frac{2q - n}{q},
\beta^{-}\}}$.
\end{remark}
\medskip
\begin{remark}\label{r4} Our approach explores only the
behavior of the distributional function of $f$; therefore, we can
replace, with no change in the reasoning, the $L^q$ norm of $f$ by
the weak-$L^q$ norm of $f$, or conditions like $\int_{B_r} |f| dX
\lesssim r^{\frac{n}{q'}}$.

\end{remark}

\medskip
\begin{remark}\label{r5}
It follows from  Remark \ref{r3} that if $f \in L^n(Q_1)$, then $u
\in C^{0, \alpha}_\text{loc}$ for any $\alpha < 1$, see also
\cite{S}, Corollary 2.2. The optimal universal modulus of
continuity $\omega(t)$, for the conformal threshold case should,
therefore, be of order $\text{o}(t^\alpha)$, for any $\alpha < 1$;
however, $\frac{w(t)}{t} \nearrow +\infty$ as $t = \text{o}(1)$.
The precise asymptotic behavior of $\omega$ is the objective of
Section \ref{Section Log-Lip}.
\end{remark}

\section{Log-Lipschitz estimates} \label{Section Log-Lip}

In this section we address the question of finding the optimal and universal modulus of continuity for solutions of uniformly elliptic equations with right-hand-side in the critical space $L^n$. Such estimate is particularly important to the general theory of fully nonlinear elliptic equations. Through a simple analysis one verifies that solution $u$ to $F(D^2u) = f \in L^n$ may not be Lipschitz continuous, though from Remark \ref{r5} $u$ is $C^{0,\alpha}$ for any $\alpha < 1$. Our ultimate goal is to prove that $u$ has a universal Log-Lipschitz modulus of continuity.

Our strategy here is also based on a fine compactness approach. Next lemma is pivotal for our iterative analysis.

\begin{lemma}\label{key lemma Log-Lip} Let $u$ be a viscosity solution to $F(X, D^2u) = f(X)$, with $|u| \le 1$,  in $Q_1$. There exist $\eta > 0$ and $0 < \rho < 1/2$ depending only on $n$, $\Lambda, ~ \lambda$ and $\gamma$, such that if
\begin{equation}\label{Cond key lemma}
    \intav{Q_1} \beta(X)^n dX \le \eta^n \quad \text{and} \quad \int_{Q_1} |f(X)|^{n} dX \le \eta^n,
\end{equation}
then, we can find an affine function $\ell(X) := a + \mathbf{b}\cdot X$, with universally bounded coefficients, $|a| + |\mathbf{b}| \le C(n, \Lambda,\lambda)$, such that
\begin{equation}\label{thesis key lemma}
    \sup\limits_{Q_{\rho}} |u(X) - \ell(X)| \le \rho.
\end{equation}
\end{lemma}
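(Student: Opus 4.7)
The plan is to produce the affine approximation by applying the Compactness Lemma and then extracting the first-order Taylor polynomial at the origin of the resulting constant-coefficient solution. Since the compactness lemma was stated with an $L^{n-\varepsilon}$ smallness hypothesis on $f$, I first note that for $\eta \le 1$, H\"older's inequality on the unit cube yields
$$\int_{Q_1}|f(X)|^{n-\varepsilon}\,dX \;\le\; \left(\int_{Q_1}|f(X)|^n\,dX\right)^{\!(n-\varepsilon)/n} \;\le\; \eta^{\,n-\varepsilon},$$
so the hypotheses of Lemma \ref{compactness} are met whenever those of the present lemma are. Fix an approximation parameter $\delta>0$ (to be chosen universally below) and apply Lemma \ref{compactness} to produce a function $h\colon Q_{1/2}\to\mathbb{R}$ and a $(\lambda,\Lambda)$-elliptic constant-coefficient operator $\mathfrak{F}$ with $\mathfrak{F}(D^2 h)=0$ in $Q_{1/2}$ and $\sup_{Q_{1/2}}|u-h|\le \delta$. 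In particular $|h|\le 1+\delta\le 2$.

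The candidate affine function is $\ell(X):= h(0) + \nabla h(0)\cdot X$. Invoking the universal interior $C^{1,\bar\alpha}$ estimates for viscosity solutions of constant-coefficient uniformly elliptic equations (\cite{CC}, Corollary 5.7), there exist universal constants $\bar\alpha\in(0,1)$ and $C>0$ such that $\|h\|_{C^{1,\bar\alpha}(Q_{1/3})}\le C$. This simultaneously provides the universal bound $|h(0)|+|\nabla h(0)|\le C$ on the coefficients of $\ell$, and the Taylor decay
$$\sup_{Q_r} \bigl| h(X) - h(0) - \nabla h(0)\cdot X\bigr| \;\le\; C\, r^{\,1+\bar\alpha}, \qquad 0<r\le \tfrac14.$$

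Combining these via the triangle inequality, for any $\rho\in(0,1/4)$,
$$\sup_{Q_\rho}|u-\ell| \;\le\; \sup_{Q_\rho}|u-h| \,+\, \sup_{Q_\rho}|h-\ell| \;\le\; \delta + C\,\rho^{1+\bar\alpha}.$$
Now I make the universal choices in order: first pick $\rho\in(0,1/4)$ so small that $C\rho^{\bar\alpha}\le 1/2$, so that $C\rho^{1+\bar\alpha}\le \rho/2$; then set $\delta := \rho/2$. With $\delta$ fixed, the Compactness Lemma furnishes the corresponding universal $\eta>0$, and we conclude $\sup_{Q_\rho}|u-\ell|\le \rho$, with $|a|+|\mathbf{b}|=|h(0)|+|\nabla h(0)|\le C(n,\lambda,\Lambda)$.

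The only genuinely delicate point is the need to extract \emph{both} a universal bound on the coefficients of $\ell$ and the sharp linear-in-$\rho$ closeness; both are handled simultaneously by the $C^{1,\bar\alpha}$ estimate on $h$, whose norm is controlled because $|h|\le 2$. Everything else is a compactness/dimensional balance: the extra decay $\rho^{\bar\alpha}$ coming from $C^{1,\bar\alpha}$ theory of the limit equation is the slack that absorbs the choice $\delta=\rho/2$ of approximation error, and converts the qualitative approximation of Lemma \ref{compactness} into the quantitative flatness improvement required to later iterate toward a Log-Lipschitz modulus of continuity.
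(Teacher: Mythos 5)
Your proof is correct and follows essentially the same route as the paper: apply the Compactness Lemma, take $\ell$ to be the first-order Taylor polynomial of $h$ at the origin, use the universal interior $C^{1,\bar\alpha}$ estimate for the constant-coefficient solution $h$ to get the decay $\sup_{Q_r}|h-\ell|\le Cr^{1+\bar\alpha}$ and the universal bound on the coefficients of $\ell$, and then choose $\rho$ and $\delta$ so the two error contributions are each at most $\rho/2$. Your explicit remark that the $L^n$-smallness of $f$ implies the $L^{n-\varepsilon}$-smallness required by Lemma \ref{compactness} (via H\"older on the unit cube) is a small point the paper leaves tacit, but otherwise the two arguments coincide.
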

\begin{proof} The proof goes in the lines of the proof of Lemma \ref{key lemma}. For a $\delta > 0$ to be chosen, we apply Lemma \ref{compactness} and find a function $h \colon Q_{1/2} \to \mathbb{R}$ satisfying
$\mathfrak{F}(D^2 h) = 0, \text{ in } Q_{1/2},$ such that
\begin{equation}\label{proof key lemma eq 01}
    \sup\limits_{Q_{1/2}} |u - h | \le \delta.
\end{equation}
Define
\begin{equation}\label{def ell}
    \ell(X) := h(0) + \nabla h(0) \cdot X.
\end{equation}
From regularity theory available for $h$, there exist universal constants $\bar{\alpha} > 0$ and $C > 0$ such that
\begin{equation}\label{proof key lemma eq 02}
    \sup\limits_{Q_{r}} |h(X) - \ell(X)  | \le Cr^{1+ \bar{\alpha}}, \quad \forall r < 1/4.
\end{equation}
Finally, we  make the universal choices:
\begin{equation}\label{proof key lemma eq 03}
    \rho := \sqrt[\bar{\alpha}]{\dfrac{1}{2C}} \quad \text{and} \quad \delta := \dfrac{1}{2} \rho.
\end{equation}
Again, the choice of $\delta$ determines $\eta$ through the Compactness Lemma \ref{compactness}, which once more is a universal choice. The proof now ends as in the proof of Lemma \ref{key lemma}.
\end{proof}

We are ready to state and prove the optimal Log-Lipschitz
regularity estimate for solutions to equation with $L^n$
right-hand-sides.

\begin{theorem} \label{Log-Lip theory} Let $u$ be a viscosity solution to $F(X, D^2u) = f(X)$ in $Q_1$. There exists a universal constant $\vartheta_0 > 0$ such that if $\sup\limits_{Y\in Q_{1/2}} \|\beta(Y, \cdot )\|_{L^n} \le \vartheta_0$, then, for a universal constant $C> 0$, for any $X, Y \in Q_{1/2}$
$$
    |u(X) - u(Y)| \le C \left \{\|u\|_{L^\infty(Q_1)} + \|f\|_{L^{n}(Q_1)} \right \} \cdot \omega (|X-Y|),
$$
where
$$
    \omega(t) := -t \log t.
$$
\end{theorem}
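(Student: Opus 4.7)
\textbf{Proof plan for Theorem \ref{Log-Lip theory}.}

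The plan is to iterate Lemma \ref{key lemma Log-Lip} dyadically around a fixed point $Y \in Q_{1/2}$, in the same spirit as the proof of Theorem \ref{Calpha theory}, but tangentially approximating $u$ by affine functions rather than constants. After the usual normalization, assume $|u|\le 1$ and $\|f\|_{L^n(Q_1)}\le \eta$, where $\eta$ is the universal constant delivered by Lemma \ref{key lemma Log-Lip}, and set $\vartheta_0=\eta$. Fix $Y\in Q_{1/2}$. I will construct inductively a sequence of affine functions $\ell_k(X)=a_k+\mathbf{b}_k\cdot(X-Y)$ such that
$$
   \sup_{Q_{\rho^k}(Y)} |u-\ell_k| \le \rho^k,\qquad
   |a_{k+1}-a_k|\le C\rho^k,\qquad |\mathbf{b}_{k+1}-\mathbf{b}_k|\le C,
$$
with $C$ universal and $\rho$ the radius from Lemma \ref{key lemma Log-Lip}. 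Take $\ell_0\equiv 0$; the base case $k=0$ is built into the normalization.

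For the inductive step, assuming the $k$th estimate, define the rescaled function
$$
   v_k(X):=\rho^{-k}\bigl[u(Y+\rho^k X)-\ell_k(Y+\rho^k X)\bigr],
$$
so that $|v_k|\le 1$ in $Q_1$. A direct computation shows $v_k$ solves $F_k(X,D^2 v_k)=f_k(X)$ in the viscosity sense, where $F_k$ is still $(\lambda,\Lambda)$-elliptic (the linear part of $\ell_k$ drops out of $D^2$), $f_k(X)=\rho^k f(Y+\rho^k X)$, and
$$
   \int_{Q_1}|f_k|^n\,dX=\int_{Q_{\rho^k}(Y)}|f|^n\,dZ\le\eta^n;
$$
the oscillation $\beta_k$ of $F_k$ satisfies the same $L^n$-smallness condition by scale invariance combined with the hypothesis on $\beta(Y,\cdot)$. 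Hence $v_k$ is entitled to Lemma \ref{key lemma Log-Lip}, producing an affine function $\tilde\ell_k(X)=\tilde a_k+\tilde{\mathbf{b}}_k\cdot X$ with $|\tilde a_k|+|\tilde{\mathbf{b}}_k|\le C$ and $\sup_{Q_\rho}|v_k-\tilde\ell_k|\le\rho$. Unwinding the scaling, set
$$
   \ell_{k+1}(X):=\ell_k(X)+\rho^k\tilde\ell_k\!\left(\rho^{-k}(X-Y)\right)
              =a_k+\rho^k\tilde a_k+(\mathbf{b}_k+\tilde{\mathbf{b}}_k)\cdot(X-Y),
$$
which gives both the next decay estimate on $Q_{\rho^{k+1}}(Y)$ and the stated bounds on the increments of $a_k$ and $\mathbf{b}_k$.

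The decisive observation — and the reason only Log-Lipschitz (not Lipschitz) control can be hoped for — is that while $a_k$ converges geometrically to $u(Y)$, the gradient vectors $\mathbf{b}_k$ need only satisfy $|\mathbf{b}_k|\le Ck$, i.e.\ they may grow linearly in the iteration index. Given $X$ with $r:=|X-Y|\in(\rho^{k+1},\rho^k]$, the triangle inequality yields
$$
   |u(X)-u(Y)|\le |u(X)-\ell_k(X)|+|\mathbf{b}_k|\,|X-Y|+|u(Y)-a_k|
              \le \rho^k+Ckr+\tfrac{C}{1-\rho}\rho^k.
$$
Since $k\lesssim \log(1/r)$ and $\rho^k\le r/\rho$, both terms are controlled by $-Cr\log r$, yielding $|u(X)-u(Y)|\le C\,\omega(|X-Y|)$ as claimed. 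Undoing the normalization reinstates the factor $\|u\|_{L^\infty(Q_1)}+\|f\|_{L^n(Q_1)}$.

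\textbf{Main obstacle.} The subtle point is not any single inductive step but the bookkeeping of the affine corrections across scales: one must show the \emph{constant} parts telescope summably while the gradient parts are allowed to grow but only at rate $O(k)$. It is this borderline divergence of $\mathbf{b}_k$ that converts the would-be Lipschitz bound into the sharp Log-Lipschitz modulus $-t\log t$, perfectly consistent with the failure of Lipschitz regularity for $L^n$ data noted in the Introduction.
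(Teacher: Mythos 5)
Your proof is correct and follows essentially the same approach as the paper: iterate Lemma \ref{key lemma Log-Lip} at dyadic scales $\rho^k$ to build affine approximants $\ell_k$, observe that the constant terms $a_k$ telescope while the gradients $\mathbf{b}_k$ may grow only linearly in $k$, and convert $k\sim\log(1/r)$ into the Log-Lipschitz modulus. The definition of $v_k$, the scaling of $F$ and $f$, the bookkeeping of the increments $|a_{k+1}-a_k|\le C\rho^k$, $|\mathbf{b}_{k+1}-\mathbf{b}_k|\le C$, and the final triangle-inequality argument all coincide with the paper's proof.
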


\begin{proof} Again, by scaling and normalization process, we can assume $|u| \le 1$ and $\|f\|_{L^{n}(Q_1)} \le \eta$, where $\eta$ is the universal constant from Lemma \ref{key lemma Log-Lip}. Set $\vartheta_0 = \eta$. Fixed $Y \in Q_{1/2}$, we will show that there exists a sequence of affine functions
$$
    \ell_k(X) := a_k + \mathbf{b}_k \cdot (X-Y),
$$
such that
\begin{equation}\label{proof LL eq 01}
    \sup\limits_{Q_\rho^k(Y)} | u(X) - \ell_k(X)| \le \rho^k.
\end{equation}
where $\rho$ is the radius provided in Lemma \ref{key lemma Log-Lip}.  As before, we verify \eqref{proof LL eq 01} by induction. Lemma \ref{key lemma Log-Lip} gives the 1st step of induction. Suppose we have verified the $k$th step of \eqref{proof LL eq 01}, define
$$
    v_k(X) := \dfrac{(u - \ell_k)(Y+ \rho^k X)}{\rho^k}
$$
and
\begin{equation}\label{proof Calpha eq 01.5}
    F_k(X, M) :=  \rho^{k} F(X, \dfrac{1}{\rho^{k}} M).
\end{equation}
Again $F_k$ is $(\lambda, \Lambda)$-elliptic and
$$
    F_k(Y+\rho^k X, D^2v_k) = \rho^{k} f(Y+ \rho^k X) =: f_k(X)
$$
in the viscosity sense. Furthermore,
$$
    \int_{Q_1} |f_k(X)|^n dX \le \int_{Q_1} |f (X)|^n dX \le \eta^n.
$$
Thus, we can apply Lemma \ref{key lemma Log-Lip} to the scaled function $v_k$, which assures the existence of an affine function $\tilde{\ell}_k(X) = \tilde{a}_k + \tilde{\mathbf{b}}_k \cdot X$, with $|\tilde{a}_k| + |\tilde{\mathbf{b}}_k| \le C$, such that
\begin{equation}\label{proof LL eq 02}
    \sup\limits_{Q_{\rho}} |v_k - \tilde{\ell}_k | \le \rho.
\end{equation}
Define,
\begin{equation}\label{proof LL eq 03}
    a_{k+1} := a_k + \rho^{k} \tilde{a}_k, \quad \text{and} \quad \mathbf{b}_{k+1} := \mathbf{b}_k + \tilde{\mathbf{b}}_k.
\end{equation}
Rescaling \eqref{proof LL eq 02} to the unit picture gives the $(k+1)$th induction step in \eqref{proof LL eq 01}. The universal bounds from Lemma \ref{key lemma Log-Lip} yield
\begin{equation}\label{proof LL eq 04}
    |a_{k+1} - a_k| \le C \rho^{k} \quad \text{and} \quad |\mathbf{b}_{k+1} - \mathbf{b}_k| \le C.
\end{equation}
From \eqref{proof LL eq 01} and \eqref{proof LL eq 04}, we conclude the sequence $\{a_k\}_{k\ge 1}$ converges to $u(Y)$ and
\begin{equation}\label{proof LL eq 05}
    |u(Y) - a_k| \le \dfrac{C\rho^k}{1-  \rho}.
\end{equation}
The vector sequence $\{\mathbf{b}_k\}_{k\ge 1}$ may not converge; nevertheless we estimate
\begin{equation}\label{proof LL eq 05.5}
    |\mathbf{b}_k| \le  \sum\limits_{j=1}^{k} |\mathbf{b}_k - \mathbf{b}_{k-1}| \le C k.
\end{equation}
Given $0 < r < \rho$, let $k$ be the first integer such that $\rho^{k+1} < r$. We estimate from \eqref{proof LL eq 01}, \eqref{proof LL eq 05} and \eqref{proof LL eq 05.5}
$$
    \begin{array}{lll}
        \displaystyle \sup\limits_{X \in Q_r(Y)} | u(X) - u(Y)| &\le&  \displaystyle \sup\limits_{X \in Q_{\rho^k} (Y)} | u(X) - \ell_k| +                      |u(Y) -a_k| + |\mathbf{b}_k| \rho^k \\
        &\le& C( \rho^k + k\rho^k ) \\
        & \le & \dfrac{C}{\rho} \left ( r + \dfrac{\log r}{\log \rho}  \cdot r \right ) \\
        & \le & -C r \log r,
    \end{array}
$$
and we finish up the proof of Theorem \ref{Log-Lip theory}.
\end{proof}

\bigskip

As in Remark \ref{r4}, our approach explores only the behavior of
the distributional function of $f$, thus the same conclusion holds
if $f \in L^n_\text{weak}$, or if $f$ satisfies $\int_{B_r} |f| dX
\lesssim r^{n-1}$.
\par
We also point out that it is possible to adjust the proof
presented here as to obtain $\nabla u \in \text{BMO}(Q_{1/2})$,
provided $f \in L^n(Q_1)$. Indeed, under appropriate smallness
condition on $\|f\|_{L^n(Q_1)}$ and on $\| \beta\|_{L^n}$, it follows from
Swiech's $W^{1,q}$ interior estimates that one can find $h$, solution to
$F(X,D^2h) = 0$, $\varepsilon$-close to $u$ in the
$W^{1,q}(Q_{1/2})$ topology. Thus, in the proof of Lemma \ref{key
lemma Log-Lip}, in addition to \eqref{key lemma Log-Lip} one
obtains

\begin{equation}\label{grad bmo stp 1}
    \intav{Q_\rho} |\nabla (u - \ell)|^q dX \le 1.
\end{equation}
An iterative argument as in the proof of Theorem \ref{Log-Lip
theory} provides BMO estimate of the gradient in terms of the
$L^n$-norm of $f$:
\begin{equation}\label{bmo est grad}
    \| \nabla u \|_{\text{BMO}(Q_1/2)} \le C \left \{
    \|f\|_{L^n(Q_{1})} + \|u\|_{L^\infty(Q_1)} \right \}, \quad
    \text{for } r\le 1/4,
\end{equation}
Under $\text{VMO}$ assumption on the coefficients of $F$, a slightly finer scaling argument in fact grants that $\nabla u$
lies in the space of vanishing mean oscillation,
$\text{VMO}(Q_{1/2})$ and, for some modulus of continuity
$\omega$,
$$
    \intav{Q_r} |\nabla u(X) - (\nabla u)_r|^q dX = \omega\left
    ( \|f\|_{L^n(Q_r)} \right ),
$$
where, as usual, $(\nabla u)_r =\intav{Q_r}{\nabla u(X) dX}$. Such
an estimate is based on the same iterative argument developed
here, but with \eqref{grad bmo stp 1} replaced by $\intav{Q_\rho}
|\nabla (u - \ell)|^q dX \le  \tilde{\omega}(\int_{Q_1} |f(X)|^n
dX)$, where the modulus of continuity $\tilde{\omega}$ is given
indirectly by the compactness method.

\section{$C^{1,\nu}$ interior regularity}

In this intermediary Section, we comment on the $C^{1,\nu}$
interior regularity estimates obtained by an adjustment on the
approach from Section \ref{Section Log-Lip}. The results from this
Section are enclosed in previous works, \cite{C1}, \cite{S}. We
mention such estimates and their connections here for didactic and
completeness purposes.

\par

Initially, it does follow from the theory developed in \cite{C1}
that when $f \in L^q(Q_1)$, for $q > n$, viscosity solutions to
$F(X, D^2u) = f(X)$ are differentiable and indeed they lie in
$C^{1,\mu}_\text{loc}$ for some $\mu$. As a consequence of our
analysis from Section \ref{Section Log-Lip}, we obtain a simple
proof of the fact that the optimal regularity estimate available
for the equation $F(X, D^2u) = f \in L^q$, $q > n$, is
$C^{1,\nu}_\text{loc}$, where the critical exponent $\mu$ is given
by
\begin{equation}\label{optimal C1alpha theory}
    \nu := \min \{ \dfrac{q-n}{q}, \bar{\alpha}^{-} \},
\end{equation}
and $\bar{\alpha}$ is the universal optimal exponent from the
$C^{1,\bar{\alpha}}$ regularity theory for solutions to
homogeneous $(\lambda, \Lambda)$-elliptic operators with constant
coefficients. The sharp relation stated in \eqref{optimal C1alpha
theory} should be read as follows:

\begin{equation} \label{explain sharp}
\left \{
\begin{array}{llll}
     \text{ If} & \frac{q-n}{q} < \bar{\alpha} & \text{ then } & u \in C_\text{loc}^{1, \frac{q-n}{q}}.  \\
     \text{ If} &\frac{q-n}{q} \ge \bar{\alpha} &  \text{ then } & u \in C^{1, \beta}_\text{loc},  \text{ for any } \beta < \bar{\alpha}.
\end{array}
\right.
\end{equation}

Such a regularity estimate is in fact the best possible under such
a general condition on $f$ and it agrees with the result obtained
in \cite{S}, Theorem 2.1, case 2. It is also interesting to
compare \eqref{optimal C1alpha theory} with \cite{C1}, Theorem 2.
In particular, if $f \in L^\infty$, then viscosity solutions to
$F(X, D^2u) = f \in L^\infty$ with VMO coefficients are
\textit{almost} as smooth as $F$-harmonic functions, $F(D^2h) =
0$, up to $C^{1,1^{-}}$. Optimal estimates under slightly stronger
assumption on $F$ will be addressed in Section \ref{section
C1,LL}.

To verify such a regularity estimate from our strategy we revisit
Lemma \ref{key lemma Log-Lip} and check that, for any $\alpha <
\bar{\alpha}$, under the smallness assumption on $\intav{Q_1}
\beta^n$ and on $\|f\|_{L^q}$, it is possible to choose $\rho =
\rho(\alpha)$ such that
$$
    \sup\limits_{Q_{\rho}} |u(X) - \ell(X)| \le \rho^{1+\alpha},
$$
where $\ell$ is as entitled in \eqref{def ell}. For $\nu$ as in
\eqref{optimal C1alpha theory} the re-scaled function
$$
    v_1(X) := \dfrac{(u-\ell)(\rho X)}{\rho^{1+\nu}},
$$
satisfies $\tilde{F}(X, D^2v) = \rho^{1-\nu} f(\rho X) := \tilde{f}(X)$ for some $(\lambda, \Lambda)$-elliptic operator
$\tilde{F}$ and $\|\tilde{f}\|_{L^q} \le  \| f\|_{L^q}$. By iteration, we produce a sequence of affine functions
$$
    \ell_k(X) \longrightarrow u(Y) + \nabla u(Y) \cdot X,
$$
for any fixed $Y \in Q_{1/5}$. Ultimately we conclude
$$
\sup\limits_{X \in Q_r(Y)} |u(X) - (u(Y) + \nabla u(Y) \cdot X)|
\le C r^{1+\nu},
$$
as desired.

\section{Optimal $C^{1,\mathrm{Log-Lip}}$ regularity} \label{section
C1,LL}

In this final section we close the regularity theory for fully
nonlinear equations, $F(X, D^2u) = f(X)$, by addressing the
optimal regularity estimate available for the limiting upper
borderline case  $f\in L^\infty$ -- or better yet $f \in
\text{BMO}$. It fact, it has become clear through the theory of
Harmonic Analysis and its applications that Jonh-Nirenberg's space
of bounded mean oscillation is the correct endpoint for $L^p$
integrability condition as $p \nearrow +\infty$.
    \par
For simplicity, we will only work on constant coefficient
equations. As before, similar result can be shown under
appropriate continuity assumption on the coefficients -- $C^{0,
\bar{\epsilon}}$ is enough. In view of the \textit{almost} optimal
estimate given in \eqref{optimal C1alpha theory}, if we want to
establish a fine regularity theory for solutions $F(D^2u) = f \in
L^\infty$, it is natural to assume that $F$-harmonic functions are
$C^{2+\epsilon}$ smooth; otherwise no further information could be
reveled from better hypotheses on the source function $f$. We now
state and prove our sharp $C^{1, \mathrm{Log-Lip}}$ interior
regularity theorem.

\begin{theorem}\label{C1,Log-Lip theory with BMO} Let $u$ be a viscosity solution to $F(D^2u) = f(X)$ in $Q_1$. Assume that
 for any matrix $M \in \mathcal{S}(n)$, with $F(M) = 0$, solutions to $F(D^2 h + M) = 0$, satisfies
 \begin{equation}\label{C11 hyp}
    \|h\|_{C^{2,\epsilon}(Q_{r})} \le \Theta r^{-{(2+\epsilon)}} \|h\|_{L^\infty(Q_1)},
 \end{equation}
 for some $\Theta >0$.  Then, for a constant $C> 0$, depending only on $\Theta, ~\epsilon$ and universal parameters, there holds
\begin{equation}\label{thesis C1,LL}
    \left | u(X) - \left [ u(0) + \nabla u(0) \cdot X \right ] \right| \le C \left \{\|u\|_{L^\infty(Q_1)} + \|f\|_{\mathrm{BMO}(Q_1)} \right \} \cdot |X|^2 \log |X|^{-1}
\end{equation}
\end{theorem}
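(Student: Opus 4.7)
The plan is to mimic the compactness-plus-iteration scheme of Theorem \ref{Log-Lip theory} one order higher, approximating $u$ at each scale by a \emph{quadratic} rather than an affine polynomial and invoking the $C^{2,\epsilon}$ hypothesis \eqref{C11 hyp} in place of the $C^{1,\bar\alpha}$ estimate for $F$-harmonic functions. After the standard normalization, assume $\|u\|_{L^\infty(Q_1)}\le 1$ and $\|f\|_{\mathrm{BMO}(Q_1)}\le\eta$ for a small universal $\eta$. By John-Nirenberg, $\|f-(f)_{Q_1}\|_{L^n(Q_1)}\le C\eta$ and, for every $k$, $|(f)_{Q_{\rho^{k+1}}}-(f)_{Q_{\rho^k}}|\le C\eta$. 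The source of the logarithmic factor will be exactly this scale-by-scale drift: a quantity that does not decay, only accumulates.

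\textbf{One-step lemma.} I first establish: there exist universal $\rho,\eta\in(0,\tfrac12)$ such that if $|u|\le 1$ solves $F(D^2u)=f$ with $\|f\|_{\mathrm{BMO}(Q_1)}+|(f)_{Q_1}|\le\eta$, then there is a quadratic $P(X)=a+\mathbf{b}\cdot X+\tfrac12\langle NX,X\rangle$ with $|a|+|\mathbf{b}|+|N|\le C$, $F(N)=(f)_{Q_1}$, and $\sup_{Q_\rho}|u-P|\le \rho^2$. To prove this, pick $M_0$ with $F(M_0)=(f)_{Q_1}$ (feasible by ellipticity with $|M_0|\le C\eta$), subtract the quadratic $\tfrac12\langle M_0X,X\rangle$ from $u$, and note that the resulting function solves an equation with operator $G(M):=F(M+M_0)-F(M_0)$, $G(0)=0$, and right-hand side $f-(f)_{Q_1}$ of $L^n$-norm $\le C\eta$. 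Lemma \ref{compactness} yields approximation by a solution $h$ of a homogeneous constant-coefficient equation; the hypothesis \eqref{C11 hyp} transfers to the limiting operator (constant shifts of $F$ preserve the $C^{2,\epsilon}$ regularity profile), so $h$ is $C^{2,\epsilon}$ and its Taylor polynomial $T$ at $0$ approximates it to order $r^{2+\epsilon}$. Choosing $\rho$ and $\delta$ universally yields the claim with $P:=T+\tfrac12\langle M_0X,X\rangle$.

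\textbf{Iteration and closing.} Inductively, I construct quadratics $P_k(X)=a_k+\mathbf{b}_k\cdot X+\tfrac12\langle N_k X,X\rangle$ with $F(N_k)=(f)_{Q_{\rho^k}}$ and $\sup_{Q_{\rho^k}}|u-P_k|\le\rho^{2k}$, applying the one-step lemma to the rescaled function $v_k(X):=(u-P_k)(\rho^k X)/\rho^{2k}$. This $v_k$ solves an equation with operator $\tilde F_k(M):=F(M+N_k)-(f)_{Q_{\rho^k}}$ (so $\tilde F_k(0)=0$, and \eqref{C11 hyp} is inherited) and source $\tilde f_k(X):=f(\rho^kX)-(f)_{Q_{\rho^k}}$ (zero average on $Q_1$, scale-invariant BMO norm $\le\eta$). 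Unscaling $\tilde P$ and then adjusting $N_{k+1}$ by a matrix of norm $\le C\eta$ to enforce $F(N_{k+1})=(f)_{Q_{\rho^{k+1}}}$ produces $P_{k+1}$ with the geometric increment bounds $|a_{k+1}-a_k|\le C\rho^{2k}$, $|\mathbf{b}_{k+1}-\mathbf{b}_k|\le C\rho^k$, $|N_{k+1}-N_k|\le C$. Hence $a_k\to u(0)$ and $\mathbf{b}_k\to\nabla u(0)$ at geometric rate, while only $|N_k|\le Ck$. For $r\in(\rho^{k+1},\rho^k]$ and $X\in Q_r$,
\[
    \bigl|u(X)-u(0)-\nabla u(0)\cdot X\bigr|\le|u-P_k|+|a_k-u(0)|+|\mathbf{b}_k-\nabla u(0)||X|+\tfrac12|N_k||X|^2\le C(1+k)r^2\le Cr^2\log r^{-1},
\]
which, after undoing the normalization, is \eqref{thesis C1,LL}. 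I expect the main technical hurdle to be verifying that the $C^{2,\epsilon}$ hypothesis \eqref{C11 hyp} genuinely propagates to the shifted operator $\tilde F_k$ at every scale---equivalently, that the $C^{2,\epsilon}$ estimate is uniform in the constant right-hand side $c=F(N_k)$, which is precisely where the Evans-Krylov-type robustness built into \eqref{C11 hyp} must be leveraged.
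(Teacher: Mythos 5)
Your proposal follows essentially the same route as the paper: quadratic polynomial approximation at dyadic scales via compactness, invoking \eqref{C11 hyp} in the step that replaces the $C^{1,\bar\alpha}$ input of the Log-Lipschitz proof, and obtaining the logarithmic factor from the linear growth $|N_k|\le Ck$ while $a_k,\mathbf b_k$ converge geometrically. The one substantive difference is bookkeeping: the paper fixes $F(M_k)=\langle f\rangle=(f)_{Q_1}$ for all $k$, whereas you let $F(N_k)=(f)_{Q_{\rho^k}}$ track the running dyadic average and correct $N_{k+1}$ by a matrix of size $O(\eta)$ at each step. Your variant is cleaner at the compactness step (the rescaled source $\tilde f_k$ has exactly zero average on $Q_1$ and scale-invariant BMO seminorm, so the smallness hypothesis of Lemma \ref{compactness} is verified verbatim at every scale), but it does defer to the end the point you honestly flag: the $C^{2,\epsilon}$ constant for the shifted problem $F(D^2h+N_k)=(f)_{Q_{\rho^k}}$ must be uniform in $k$, and a priori it may depend on $|(f)_{Q_{\rho^k}}|$, which by John--Nirenberg is only bounded by $O(k\eta)$. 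The paper addresses exactly the same issue with the remark that the constant $\tilde\Theta$ depends on $\Theta$ and $|c|$, and sidesteps the growth by keeping $c=\langle f\rangle$ fixed (at the cost of a drift of $\|f(\rho^i\,\cdot)-\langle f\rangle\|_{L^n(Q_1)}$ in the compactness step). Neither version fully spells out the uniformity of $\tilde\Theta$ across scales, so your flagged hurdle is real but shared; under the convexity/Evans--Krylov structure for which \eqref{C11 hyp} is meant, the required uniformity does hold, and with that input your argument closes exactly as the paper's does.
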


\begin{proof} As before, we assume, with no loss of generality, that $\|u\|_{L^\infty(Q_1)} \le 1$ and
$\|f\|_{\text{BMO}(Q_1)} \le \vartheta_0$ for some $\vartheta_0>
0$ to be determined. Hereafter in this proof, we shall label
\begin{equation} \label{avg f}
    \langle f \rangle := \intav{Q_1} f(Y) dY.
\end{equation}
Notice that universal bound for $\|f\|_{\text{BMO}(Q_1)}$ implies
$|\langle f \rangle|$ is under control. From hypothesis \eqref{C11
hyp}, equation $F(D^2h + M) = c$ also has $C^{2, \epsilon}$
interior estimates with constant $\tilde{\Theta}$, depending on
$\Theta$ and $|c|$, for any matrix $M$ satisfying $F(M) = c$. Our
strategy now is to show the existence of a sequence of quadratic
polynomials
$$
    \mathfrak{P}_k(X) := a_k + \mathbf{b}_k \cdot X + \dfrac{1}{2}X^t M_k X,
$$
where, $\mathfrak{P}_0 = \mathfrak{P}_{-1}  = \frac{1}{2} \langle
QX,X \rangle$, where $F(Q) = \langle f \rangle$, and for all $k\ge
0$,
\begin{eqnarray}
    \label{P1} F(M_k) = \langle f \rangle, &\\
    \label{P2} \sup\limits_{Q_{\rho^k}} |u - \mathfrak{P}_k| \le  \rho^{2k}, & \\
    \label{P3} |a_k - a_{k-1}| + \rho^{k-1} |\mathbf{b}_k - \mathbf{b}_{k-1}| +  \rho^{2(k-1)}  |M_k - M_{k-1}|
    \le C \rho^{2(k-1)}. &
\end{eqnarray}
The radius $\rho$ in \eqref{P2} and \eqref{P3} is determined as
\begin{equation}\label{rho}
    \rho := \sqrt[\epsilon]{\dfrac{10}{\Theta}} < 1/2.
\end{equation}
We shall verify that by induction. The first step $k=0$ is
immediately satisfied. Suppose we have verified the thesis of
induction for $k=0,1, \cdots, i$. Define the re-scaled function $v
\colon Q_1 \to \mathbb{R}$ by
$$
    v(X) := \dfrac{(u - \mathfrak{P}_i)(\rho^i X)}{\rho^{2i}}.
$$
Immediately one verifies that $v$ solves
$$
    F( D^2v + M_i) = f(\rho^{i} X) =: f_i(X)
$$
As in  Lemma \ref{compactness}, under smallness assumption on
$$
    [f]_{\text{BMO}} = \sup\limits_{0<r\le 1} \int_{Q_r} |f(X) - \intav{Q_r}{f(Y)dY}
    |^n dX,
$$
to be regulated soon, we find $h$, solution to
 \begin{equation}\label{eq for h}
     F(D^2h +M_i) = \langle f \rangle,
\end{equation}
that stays within a $\delta$-distance in the $L^\infty$-topology
to $v$ in $Q_{1/2}$.  From  hypothesis \eqref{C11 hyp}, $h$ is
$C^{2,\epsilon}$ at the origin with universal bounds. If we define
 $$
    \tilde{\mathfrak{P}}_i(X) = h(0) + \nabla h(0) \cdot X + \dfrac{1}{2} X^t D^2h(0) X,
$$
by $C^{2,\epsilon}$ regularity assumption \eqref{C11 hyp}, we  bound
\begin{equation}\label{C1,LL eq01}
    |h(0)| + |\nabla h(0) | + |D^2h(0) | \le   C{\Theta}.
\end{equation}
Select  
$$
	\delta = \frac{4}{5} \rho^2,
$$
where $\rho$ is the number in \eqref{rho}.  These choices depend only on $\Theta$, $\epsilon$ and universal constants. Again the selection of $\delta$ determines, via Lemma \ref{compactness}, the smallness assumption given by the constant $\vartheta_0 > 0$. Triangular inequality and $C^{2,\epsilon}$ regularity theory for $h$ give,
\begin{equation}\label{C1,LL eq02}
    \sup\limits_{Q_\rho} |v -\tilde{\mathfrak{P}}_i| \le \rho^2.
\end{equation}
Rescaling \eqref{C1,LL eq02} back to the unit picture yields
\begin{equation}\label{C1,LL eq021}
    \sup\limits_{Q_{\rho^{i+1}}} \left |u(X)  -\left [\mathfrak{P}_i(X) + \rho^{2i} \tilde{\mathfrak{P}}_i(\rho^{-i} X) \right ] \right | \le \rho^{2(i+1)}.
\end{equation}
Therefore, defining
$$
    \mathfrak{P}_{i+1}(X) := \mathfrak{P}_i(X) + \rho^{2i} \tilde{\mathfrak{P}}_i(\rho^{-i} X),
$$
we verify the $(i+1)$th step of induction. Notice that condition
\eqref{P1} is granted by equation \eqref{eq for h}. From
\eqref{P3} we conclude that $a_k \to u(0)$ and $\mathbf{b}_k \to
\nabla u(0)$, in addition
\begin{eqnarray}
    \label{Conv 1} |u(0) - a_k| \le C \rho^{2k} \\
    \label{Conv 2} |\nabla u(0) - \mathbf{b}_k| \le C \rho^k.
\end{eqnarray}

Notice that from \eqref{P3} it is not possible to assure convergence of the matrices $M_k$; nevertheless, we estimate
\begin{eqnarray}
    \label{Conv 3}|M_k| \le C k.
\end{eqnarray}
Finally, given any $ 0 < r< 1/2$, let $k$ be the integer such that
$$
    \rho^{k+1} < r \le  \rho^k.
$$
We estimate, from \eqref{Conv 1}, \eqref{Conv 2} and \eqref{Conv 3},
$$
    \begin{array}{lll}
        \sup\limits_{Q_r} \left | u(X) - \left [ u(0) + \nabla u(0) \cdot X \right ] \right| &\le& \rho^{2k} + |u(0) - a_k| + \rho  |\nabla u(0) - \mathbf{b}_k| + \rho^{2k}|M_k| \\
        &\le & -C r^2 \log r,
    \end{array}
$$
and the Theorem is proven.
\end{proof}

\bigskip

Adjustments on the proof of Theorem \ref{C1,Log-Lip theory with
BMO}, similar to the ones explained at the end of Section
\ref{Section Log-Lip}, yield that $D^2u \in \text{BMO}(Q_{1/2})$,
with appropriate \textit{a priori} estimate on the
$\text{BMO}(Q_{1/2})$ norm of the Hessian of $u$ in terms of the
$\text{BMO}$ norm of $f$ in $Q_1$. Therefore, by the same slightly
finer scaling procedure mentioned at the end of Section
\ref{Section Log-Lip}, we conclude that, if $f \in \text{VMO}$,
then so does $D^2u$.

\par

As mentioned earlier, Theorem \ref{C1,Log-Lip theory with BMO}
holds true for equations with $C^{0, \bar{\epsilon}}$
coefficients, or more generally under the condition
$$
    \intav{Q_r}   | \tilde{\beta}_F(X)  |^n dX \le \eta^n \cdot  r^{n \bar{\epsilon}},
$$
for some $\eta > 0$ small enough, where, as in \cite{C1}, \cite{CC},
$$
    \tilde{\beta}_F(X) := \sup\limits_{N \in \mathcal{S}(n)}  \dfrac{\left |F(X,N) - F(0,N) \right |}{1+\|N\|}.
$$
Indeed, under $C^{0, \bar{\epsilon}}$ continuity of the coefficients and hypothesis on $C^{2,\epsilon}$ \textit{a priori} estimates for $F(0,D^2h) = 0$, it follows from \cite{C1} that solutions to  variable coefficients, homogeneous equation, $F(X, D^2\xi) = 0$ are $C^{2,\delta}$, with appropriate \textit{a priori} estimates.

Evans and Krylov regularity theory,  \cite{E} and \cite{K1, K2},
assures that convex or concave operators do satisfy \eqref{C11
hyp}. The work of Cabre and Caffarelli, \cite{CC03}, provides
another class of equations for which one can apply Theorem
\ref{C1,Log-Lip theory with BMO}. We further mention that Theorem
\ref{C1,Log-Lip theory with BMO} can easily be written as the
following local \textit{a priori} estimates
\begin{equation}\label{thesis C1,LL general}
    \sup\limits_{Q_r(Y)} \left | u(X) - \left [ u(Y) + \nabla u(Y) \cdot X \right ] \right| \le C \left \{
    \|u\|_{L^\infty(Q_1)} + \|f\|_{\text{BMO}(Q_1)} \right \}  r^2 \log r^{-1},
\end{equation}
for any $ Y \in Q_{1/2}$, $ r < 1/2$. After these comments, we can state the following Corollary.

\begin{corollary} \label{cor theo C1,LL} Let $F \colon Q_1 \times \mathcal{S}(n)$ be a $(\lambda, \Lambda)$-elliptic operator. Define
$$
    \tilde{\beta}_F(X,Y) :=   \sup\limits_{N \in \mathcal{S}(n)}  \frac{\left |F(X,N) - F(Y,N) \right |}{1+\|N\|}
$$
and assume $\sup\limits_{Y \in Q_1} \| \tilde{\beta}_F(X,Y)
\|_{C^{0,\epsilon}} :=K < \infty$. Let $u$ be a solution to $F(X,
D^2u) = f \in  \mathrm{BMO}(Q_1)$. Then, for some $C > 0$ that
depends only on $C^{2,\epsilon}$ \textit{a priori} regularity
estimates available for $F(X, D^2u) = 0$, $K$, and universal
constants, there holds
\begin{equation}\label{thesis C1,LL MORE general}
    \sup\limits_{Q_r(Y)} \left | u(X) - \left [ u(Y) + \nabla u(Y) \cdot X \right ] \right| \le C \left \{
    \|u\|_{L^\infty(Q_1)} + \|f\|_{\mathrm{BMO}(Q_1)} \right \}  r^2 \log r^{-1}.
\end{equation}
\end{corollary}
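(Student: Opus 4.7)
The plan is to localize at any $Y \in Q_{1/2}$ --- by translation we may take $Y = 0$ --- and reproduce the induction scheme from Theorem \ref{C1,Log-Lip theory with BMO}, with the constant-coefficient operator there replaced by the frozen-coefficient operator $F_0(N) := F(0, N)$. The hypothesis $\sup_Y \|\tilde\beta_F(\cdot, Y)\|_{C^{0,\epsilon}} = K$ combined with the obvious identity $\tilde\beta_F(0,0) = 0$ gives the pointwise decay
$$
\tilde\beta_F(X, 0) \le K |X|^\epsilon,
$$
whence $\intav{Q_r} |\tilde\beta_F(X,0)|^n \, dX \le C K^n r^{n\epsilon}$. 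By the remark preceding the statement, the assumption of $C^{2,\epsilon}$ a priori estimates for $F(X, D^2 \xi) = 0$ follows from the $C^{2,\epsilon}$ hypothesis on the frozen equation coupled with $C^{0,\epsilon}$ continuity of the coefficients, via the theory of \cite{C1}.

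With this setup in hand, the inductive construction produces quadratic polynomials $\mathfrak{P}_k(X) = a_k + \mathbf{b}_k \cdot X + \tfrac{1}{2} X^t M_k X$ with $F_0(M_k) = \langle f \rangle$ and decay rate $\sup_{Q_{\rho^k}} |u - \mathfrak{P}_k| \le \rho^{2k}$, exactly as in \eqref{P1}--\eqref{P3}. The transition from step $k$ to step $k+1$ rescales $v_k(X) := \rho^{-2k}(u - \mathfrak{P}_k)(\rho^k X)$, which solves $\tilde F_k(X, D^2 v_k + M_k) = f(\rho^k X)$ where $\tilde F_k(X, N) := F(\rho^k X, N)$. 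A variable-coefficient version of Lemma \ref{compactness}, in which the $\beta$-smallness is replaced by $\tilde\beta$-smallness and the limit operator is $F_0(\cdot + M_k)$, furnishes an approximating $h$ solving $F(0, D^2 h + M_k) = \langle f \rangle$; hypothesis \eqref{C11 hyp} then controls the quadratic Taylor polynomial of $h$ at the origin with universally bounded coefficients. The same universal choice $\rho = \sqrt[\epsilon]{10/\Theta}$ and $\delta = \tfrac{4}{5}\rho^2$ as in Theorem \ref{C1,Log-Lip theory with BMO} closes the induction.

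The principal technical point --- and what makes the refinement over Theorem \ref{C1,Log-Lip theory with BMO} nontrivial --- is ensuring the smallness of the rescaled oscillation uniformly in $k$. A direct computation yields $\tilde\beta_{\tilde F_k}(X,0) \lesssim (1 + \|M_k\|)\, \tilde\beta_F(\rho^k X,0)$; since \eqref{P3} only provides the linear growth $|M_k| \le Ck$, the averaged $L^n$-norm on $Q_1$ is bounded by $C K^n (1+k)^n \rho^{k n\epsilon}$, which tends to $0$ geometrically. A preliminary dilation $u_{r_0}(X) := r_0^{-2} u(r_0 X)$ at a single scale $r_0 = r_0(K, \Theta, \epsilon, n, \lambda, \Lambda)$, chosen small enough, absorbs both the constant $K$ and the polynomial prefactor $(1+k)^n$ and places every subsequent step squarely inside the regime of the compactness Lemma \ref{compactness}. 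Summing telescopically exactly as in the end of Theorem \ref{C1,Log-Lip theory with BMO} --- noting $|u(0) - a_k| \lesssim \rho^{2k}$, $|\nabla u(0) - \mathbf{b}_k| \lesssim \rho^k$, $|M_k| \lesssim k$, and choosing $k$ with $\rho^{k+1} < r \le \rho^k$ --- yields
$$
    \sup_{Q_r(Y)} \bigl| u(X) - [u(Y) + \nabla u(Y) \cdot X] \bigr| \le -C \, r^2 \log r,
$$
with $C$ depending on $K$, $\Theta$, $\epsilon$ and the universal ellipticity parameters, which is \eqref{thesis C1,LL MORE general}.
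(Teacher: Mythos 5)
Your proposal is correct, and it takes a noticeably different route from the paper's one-paragraph indication. The paper's sketch invokes Caffarelli's Schauder-type theory to first upgrade the frozen-coefficient $C^{2,\epsilon}$ hypothesis to $C^{2,\delta}$ \textit{a priori} estimates for the variable-coefficient homogeneous equation $F(X, D^2\xi) = 0$, and then leaves to the reader the (nontrivial) task of re-running the argument of Theorem \ref{C1,Log-Lip theory with BMO} with those variable-coefficient estimates. You instead bypass Caffarelli's Schauder theorem entirely and run the compactness-plus-iteration scheme with the frozen operator $F_0(N) = F(0,N)$ as the approximating equation, which uses hypothesis \eqref{C11 hyp} directly and avoids any exponent degradation from $\epsilon$ to $\delta$. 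The genuinely nontrivial observation you supply — and which the paper glosses over — is that absorbing the drifting Hessian $M_k$ into the operator inflates the oscillation modulus by a factor $(1 + \|M_k\|)$, which by \eqref{P3} grows linearly in $k$; your bound $\intav{Q_1}\tilde\beta_{G_k}^n \lesssim (1+k)^n K^n \rho^{kn\epsilon}$ is the right one (technically it is $\tilde\beta$ of the shifted operator $G_k(X,N) := F(\rho^k X, N + M_k)$, not of $\tilde F_k$, but that is what you clearly intend). Since $(1+k)^n \rho^{kn\epsilon}$ is a bounded sequence, your preliminary dilation at a single $K$-dependent scale $r_0$ places every step uniformly within the regime of the compactness lemma, with the $r_0$-dependence of the final constant harmlessly folded into the $K$-dependence allowed by the statement. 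In short: your argument is complete and self-contained where the paper's is a remark; the only trade-off is that you keep the factor $(1+\|M_k\|)$ explicit and pay for it with the preliminary dilation, whereas the paper's route hides the same issue inside Caffarelli's variable-coefficient Schauder estimates.
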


\medskip

It is also  simple to verify that the thesis of Corollary \ref{cor
theo C1,LL}, estimate \eqref{thesis C1,LL MORE general},  implies,
for an \textit{a priori} $C^{1,1}$ solution $u$, the corresponding
bounds to the adimensional $C^{1, \mathrm{Log-Lip}}$ norm
$$
    \|u\|^{*}_{C^{1, \mathrm{Log-Lip}}(Q_d)} := \|u\|_{L^\infty(Q_d)} + d \|\nabla u\|_{L^\infty(Q_d)}  + d^2\log d^{-1} \|D^2u\|_{L^\infty(Q_d)}.
$$

\bigskip

\bigskip

\noindent{\bf Acknowledgment.} Part of this paper was written
while the author was visiting University of Chicago and University
of Texas at Austin. The author would like to thank these
institutions for their warm hospitality. The author also would
like to thank Luis Silvestre, for insightful comments and
suggestions, in particular for pointing out improvements to
Theorem \ref{C1,Log-Lip theory with BMO}. This work is partially
supported by CNPq-Brazil.

\bibliographystyle{amsplain, amsalpha}

\begin{thebibliography}{60}

\bibitem[CC03]{CC03} Cabre, Xavier; Caffarelli, Luis A.  \textit{Interior $C^{2,\alpha}$ regularity theory for a class of nonconvex fully nonlinear elliptic equations.}    J. Math. Pures Appl.    \textbf{82} (9) (2003), 573--612

\bibitem[Caff89]{C1} Caffarelli, Luis A. \textit{Interior a priori estimates for solutions of fully nonlinear equations.} Ann.
of Math. (2) \textbf{130} (1989), no. 1, 189--213.

\bibitem[CC95]{CC} Caffarelli, Luis A.; Cabr\'e, Xavier
\textit{Fully nonlinear elliptic equations.} American Mathematical
Society Colloquium Publications, 43. American Mathematical Society,
Providence, RI, 1995.

\bibitem[Ev82]{E} Evans, L. C.,
\textit{Classical solutions of fully nonlinear, convex, second-order elliptic equations}. Comm. Pure Appl. Math.
35(3), 333--363, 1982.

\bibitem[Es93]{Es} Escauriaza, L. \textit{$W^{2,n}$ a priori estimates for solutions to fully non-linear elliptic equations}. Indiana Univ. Math. J. \textbf{42}, no. 2 (1993), 413--423.

\bibitem[K82]{K1}  Krylov, N. V. \textit{Boundedly nonhomogeneous elliptic and parabolic equations.} Izv. Akad. Nak. SSSR Ser. Mat. \textbf{46} (1982), 487--523; English transl. in Math USSR Izv. 20 (1983), 459--492.

\bibitem[K83]{K2} Krylov, N. V. \textit{Boundedly nonhomogeneous elliptic and parabolic equations in a domain.} Izv. Akad. Nak. SSSR Ser. Mat. \textbf{47} (1983), 75--108; English transl. in Math USSR Izv. 22 (1984), 67--97.

\bibitem[KS79]{KS1} Krylov, N. V.; Safonov, M. V. \textit{An estimate of the probability that a diffusion process hits a set of positive measure.} Dokl. Akad. Nauk. SSSR \textbf{245} (1979), 235--255. English translation in Soviet Math Dokl. 20 (1979), 235--255.

\bibitem[KS80]{KS2} Krylov, N. V.; Safonov, M. V. \textit{Certain properties of solutions of parabolic equations with measurable coefficients.} Izvestia Akad Nauk. SSSR \textbf{40} (1980), 161--175.


\bibitem[M]{M} U. Menne,
\textit{Decay estimates for the quadratic tilt-excess of integral varifolds}. Arch. Ration. Mech. Anal. (to appear)

\bibitem[NV07]{NV1} N. Nadirashvili and S. Vladut,
\textit{Nonclassical solutions of fully nonlinear elliptic equations}. Geom. Funct. Anal. {\bf 17} (2007), no. 4, 1283–1296.

\bibitem[NV08]{NV2} N. Nadirashvili and S. Vladut,
{\it Singular viscosity solutions to fully nonlinear elliptic
equations.} J. Math. Pures Appl. (9) {\bf 89} (2008), no. 2,
107--113.

\bibitem[NV]{NV}  N. Nadirashvili and S. Vladut,
\textit{Nonclassical Solutions of Fully Nonlinear Elliptic
Equations II. Hessian Equations and Octonions}. Geom. Funct. Anal.
{\bf 21} (2011), 483-498

\bibitem[S97]{S} A. Swiech, \textit{$W^{1,p}$-Interior estimates for solutions of fully nonlinear, uniformly elliptic equations}.  Adv. Differential Equations {\bf 2} (1997), no. 6, 1005--1027.


\end{thebibliography}

\bigskip

\noindent \textsc{Eduardo V. Teixeira} \\
\noindent Universidade Federal do Cear\'a \\
\noindent Departamento de Matem\'atica \\
\noindent Campus do Pici - Bloco 914, \\
\noindent Fortaleza, CE - Brazil 60.455-760 \\
 \noindent \texttt{eteixeira@ufc.br}

\end{document}